\newtheorem{theorem}{Theorem}[section]
\newtheorem{proposition}[theorem]{Proposition}
\newtheorem{lemma}[theorem]{Lemma}
\newtheorem{corollary}[theorem]{Corollary}
\theoremstyle{definition}
\newtheorem{example}[theorem]{Example}
\newtheorem{conjecture}[theorem]{Conjecture}
\newtheorem{remark}[theorem]{Remark}
\newcommand{\NN}{ \ensuremath{\mathbb{N}}}
\newcommand{\init}{\ensuremath{\mathrm{in}}\hspace{1pt}}
\newcommand{\lex}{{\mathrm{lex}}}
\newcommand{\reg}{\ensuremath{\mathrm{reg}}\hspace{1pt}}
\newcommand{\Tor}{\ensuremath{\mathrm{Tor}}\hspace{1pt}}
\newcommand{\aaa}{\mathbf{a}}
\newcommand{\bb}{\mathbf{b}}
\newcommand{\ee}{\mathbf{e}}
\def\cocoa{{\hbox{\rm C\kern-.13em o\kern-.07em C\kern-.13em o\kern-.15em A}}}
\newcommand{\mult}{\mathrm{mult}}
\newcommand{\lcm}{\mathrm{lcm}}
\begin{document}

\title{Regularity bounds for binomial edge ideals}

\author{Kazunori Matsuda}
\address{
Kazunori Matsuda,
Graduate School of Mathematics, 
Nagoya University, 
Furocho, Chikusaku, Nagoya 464-8602, Japan
}

\author{Satoshi Murai}
\address{
Satoshi Murai,
Department of Mathematical Science,
Faculty of Science,
Yamaguchi University,
1677-1 Yoshida, Yamaguchi 753-8512, Japan.
}

\dedicatory{Dedicated to Professor J\"urgen Herzog on the occasion of his 70th birthday}

\begin{abstract}
We show that the Castelnuovo--Mumford regularity of the binomial edge ideal of a graph
is bounded below by the length of its longest induced path and bounded above by the number of its vertices.
\end{abstract}

\maketitle

\section{Introduction}

Let $G$ be a simple graph on the vertex set $[n]=\{1,2,\dots,n\}$.
The {\em binomial edge ideal $J_G$} of $G$,
introduced by Herzog et.al.\ \cite{HHHKR} and Ohtani \cite{Oh},
is the ideal in 
the polynomial ring $S=K[x_1,\dots,x_n,y_1,\dots,y_n]$ over a field $K$,
defined by
$$J_G=(x_iy_j-x_jy_i: \{i,j\} \mbox{ is an edge of $G$}).$$
From an algebraic view point, it is of interest to study relations between algebraic properties of $J_G$
and combinatorial properties of $G$.
In this note, we prove the following simple combinatorial bounds for the regularity of binomial edge ideals.

\begin{theorem}
\label{1.1}
Let $G$ be a simple graph on $[n]$ and let $\ell$ be the length of the longest induced path of $G$.
Then
$$\ell + 1 \leq \reg (J_G) \leq n.$$
\end{theorem}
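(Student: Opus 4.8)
The plan is to prove the two inequalities by different mechanisms: the lower bound by reducing to the case of an induced path, and the upper bound by an induction on the graph driven by an Ohtani-type decomposition of $J_G$.

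For the lower bound I would first pin down the regularity of a path. If $P_m$ denotes the path on $m$ vertices, then with respect to the lexicographic order ($x_1 > \cdots > x_n > y_1 > \cdots > y_n$) the generators $f_i = x_iy_{i+1}-x_{i+1}y_i$ have pairwise coprime initial terms $\init(f_i)=x_iy_{i+1}$. Coprime initial terms force $f_1,\dots,f_{m-1}$ to be a regular sequence, so $J_{P_m}$ is a complete intersection of $m-1$ quadrics; its resolution is the Koszul complex and hence $\reg(J_{P_m})=m$. Next I would prove monotonicity under induced subgraphs, namely $\reg(J_{G[W]})\le\reg(J_G)$ for $W\subseteq[n]$. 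The admissible-path description of the reduced Gröbner basis in \cite{HHHKR} yields $J_G\cap K[x_i,y_i:i\in W]=J_{G[W]}$, so $K[x_i,y_i:i\in W]/J_{G[W]}$ is an algebra retract of $S/J_G$; the retract gives termwise inequalities $\beta_{ij}\le\beta_{ij}$, hence $\reg(J_{G[W]})\le\reg(J_G)$. Applying this to the longest induced path $H=P_{\ell+1}$ gives $\reg(J_G)\ge\reg(J_{P_{\ell+1}})=\ell+1$.

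For the upper bound I would induct using Ohtani's decomposition \cite{Oh}: for a vertex $v$, let $G_1$ be $G$ with the neighborhood $N_G(v)$ completed to a clique; then $J_G=J_{G_1}\cap\big((x_v,y_v)+J_{G\setminus v}\big)$, which produces the short exact sequence
\[
0\to S/J_G\to S/J_{G_1}\oplus S/\big((x_v,y_v)+J_{G\setminus v}\big)\to S/\big(J_{G_1}+(x_v,y_v)+J_{G\setminus v}\big)\to 0 .
\]
Modulo the regular sequence $x_v,y_v$, the second and third nonzero terms are binomial edge ideals of graphs on the $n-1$ vertices $[n]\setminus\{v\}$ (namely $G\setminus v$ and $(G_1)\setminus v$), so the inductive hypothesis bounds their regularity by $n-2$. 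Feeding this into the standard regularity estimate $\reg(S/J_G)\le\max\{\reg B,\reg C+1\}$ for the sequence reduces the whole problem to showing $\reg(S/J_{G_1})\le n-1$. Since $G_1$ still has $n$ vertices, I would run a secondary induction on the number of edges: $G_1$ has strictly more edges than $G$ unless $v$ is already simplicial, and the process terminates at disjoint unions of complete graphs, where $J_{K_m}$ is the ideal of $2\times2$ minors of a $2\times m$ matrix with $\reg(S/J_{K_m})=1$ by the Eagon--Northcott resolution; tensoring over $K$ shows $\reg(S/J_G)\le n-1$ in this base case. Collating the bounds yields $\reg(S/J_G)\le n-1$, i.e.\ $\reg(J_G)\le n$.

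The main obstacle is the upper bound, and specifically the bookkeeping forced by $G_1$: completing a neighborhood does not reduce the number of vertices, so a naive induction on $n$ stalls and one is obliged to organize a double induction (first on $n$, then on the edge count) and to check the base cases. Along the way one must verify that Ohtani's intersection is genuinely an equality of ideals and that, after killing $x_v,y_v$, all terms of the sequence are again binomial edge ideals so that the inductive hypotheses apply. By comparison the lower bound is comparatively routine once the retract identity $J_G\cap K[x_i,y_i:i\in W]=J_{G[W]}$ and the complete-intersection computation for paths are in place.
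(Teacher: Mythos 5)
Your proposal is essentially correct, but both halves take routes that differ from the paper's. For the lower bound, the paper does not use the retract structure at all: Lemma \ref{2.1} works directly with the $\NN^n$-grading ($\deg x_i=\deg y_i=\ee_i$) and observes that the strand of the minimal multigraded free resolution of $S/J_G$ in degrees supported on $W$ is already the minimal resolution of $S/J_{G_W}$; this gives $\beta_{i,j}(J_{G_W})\leq\beta_{i,j}(J_G)$ with no appeal to the intersection $J_G\cap K[x_i,y_i:i\in W]=J_{G_W}$ or to a general ``retracts do not increase Betti numbers'' principle. Your retract argument can be made to work, but that principle is precisely where the content of Lemma \ref{2.1} lives, so you should either prove it (e.g.\ by the same multigraded restriction plus K\"unneth) or cite it; as stated it is a black box. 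Two smaller points there: the initial terms $x_iy_{i+1}$ of the path binomials are pairwise coprime only for a consecutively labelled path (e.g.\ for the induced path $2\to 1\to 3$ they are $x_1y_2$ and $x_1y_3$), so you need the harmless remark that relabelling is a permutation of variables. For the upper bound the divergence is genuine: the paper passes to the squarefree initial ideal $\init_{>_\lex}(J_G)$, generated by the monomials $m_P$ of admissible paths, and proves the multigraded vanishing $\beta_{p,w}(S/\init_{>_\lex}(J_G))=0$ whenever $\#\mult(w)\geq p$ via an iterated mapping-cone/Lyubeznik-subset analysis (Proposition \ref{3.5}); your route instead uses Ohtani's decomposition $J_G=J_{G_1}\cap((x_v,y_v)+J_{G\setminus v})$ at a non-simplicial vertex $v$, the Mayer--Vietoris sequence, and a double induction (on $n$, then on the edge count, terminating at disjoint unions of cliques resolved by Eagon--Northcott). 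Your induction closes correctly: the two terms involving $(x_v,y_v)$ drop to $n-1$ vertices, the term $S/J_{G_1}$ is handled by the edge induction since completing a non-clique neighbourhood strictly adds edges, and the base case gives $\reg(S/J_G)\leq 1$ per clique component. What each approach buys: yours is shorter, stays at the level of ideals rather than initial ideals, and is the technique that later yields sharper clique-type bounds; the paper's argument proves the a priori stronger statement $\reg(\init_{>_\lex}(J_G))\leq n$ together with finer multigraded vanishing of Betti numbers of the initial ideal, which does not follow from your exact sequence argument. The remaining things you should nail down are the precise statement and reference for Ohtani's decomposition and the additivity of regularity over tensor products of the clique components; with those in place the argument is complete.
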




\section{A lower bound}
In this section, we prove a lower bound in Theorem \ref{1.1}.
Throughout the paper,
we will use the standard terminologies of graph theory in \cite{D}.

We consider the $\NN^n$-grading of $S$
defined by $\deg x_i = \deg y_i = \ee_i$,
where $\ee_i$ is the $i$-th unit vector of $\NN^n$.
Binomial edge ideals are $\NN^n$-graded by definition.
For an $\NN^n$-graded $S$-module $M$ and $\aaa \in \NN^n$,
we write $M_\aaa$ for the graded component of $M$ of degree $\aaa$
and write $\beta_{i,\aaa}(M)=\dim_K \Tor_i(M,K)_\aaa$ for the {\em $\NN^n$-graded Betti numbers} of $M$.
Also, for $\aaa=(a_1,\dots,a_n) \in \NN^n$,
let $\mathrm{supp}(\aaa)=\{i \in [n]: a_i \ne 0\}$ and $|\aaa|=a_1+ \cdots + a_n$.
Then the {\em $\NN$-graded Betti numbers} of $M$ are $\beta_{i,j}(M)=\sum_{\aaa \in \NN^n, |\aaa|=j} \beta_{i,\aaa}(M)$
and the {\em (Castelnuovo--Mumford) regularity} of $M$ is
$$\reg(M)= \max\{j: \beta_{i,i+j}(M) \ne 0 \mbox{ for some }i\}.$$

For a simple graph $G$ on the vertex set $[n]$
and for a subset $W \subset [n]$,
we write $G_W$ for the induced subgraph of $G$ on $W$.
For convenience we consider that $G_W$ has the vertex set $[n]$
and regard $J_{G_W}$ as an ideal of $S$.

\begin{lemma}
\label{2.1}
Let $G$ be a simple graph on $[n]$ and let $W \subset [n]$.
Then, for any $\aaa \in \NN^n$ with $\mathrm{supp}(\aaa) \subset W$,
one has
$$\beta_{i,\aaa}(J_G)=\beta_{i,\aaa}(J_{G_W}) \ \ \mbox{ for all }i.$$
\end{lemma}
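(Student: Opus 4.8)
The plan is to realize the multigraded Betti numbers as Koszul homology and then observe that, in a multidegree $\aaa$ with $\mathrm{supp}(\aaa)\subseteq W$, every graded piece of $J_G$ that enters the computation already has its support inside $W$ --- and there $J_G$ and $J_{G_W}$ agree verbatim.

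First I would fix the Koszul complex $K_\bullet$ of $S$ on the $2n$ variables $x_1,\dots,x_n,y_1,\dots,y_n$, which is an $\NN^n$-graded minimal free resolution of $K=S/\mideal$, and use it to compute $\beta_{i,\aaa}(J_G)=\dim_K H_i(J_G\otimes_S K_\bullet)_\aaa$. Choosing the exterior-algebra basis $u_A\wedge v_B$ (where $u_i$ corresponds to $x_i$ and $v_i$ to $y_i$, each of multidegree $\ee_i$) for $A,B\subseteq[n]$, the basis element attached to $(A,B)$ has multidegree $\mathbf{1}_A+\mathbf{1}_B$, the vector whose $k$-th entry is $[k\in A]+[k\in B]$. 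Hence in each homological degree the tensor complex decomposes as a direct sum $(J_G\otimes_S K_p)_\aaa=\bigoplus_{A,B}(J_G)_{\aaa-\mathbf{1}_A-\mathbf{1}_B}$ over the pairs with $|A|+|B|=p$.

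The key reduction is that every shift $\mathbf{1}_A+\mathbf{1}_B$ lies in $\NN^n$, so each degree $\bb=\aaa-\mathbf{1}_A-\mathbf{1}_B$ occurring above satisfies $\bb\le\aaa$ componentwise, whence $\mathrm{supp}(\bb)\subseteq\mathrm{supp}(\aaa)\subseteq W$. It therefore suffices to prove that $(J_G)_\bb=(J_{G_W})_\bb$ for every $\bb\in\NN^n$ with $\mathrm{supp}(\bb)\subseteq W$. Writing $J_G=\sum_{e\in E(G)}Sf_e$ with $f_{\{i,j\}}=x_iy_j-x_jy_i$ of multidegree $\ee_i+\ee_j$, and using that passing to the degree-$\bb$ component commutes with sums of graded submodules, I get $(J_G)_\bb=\sum_{e\in E(G)}S_{\bb-\ee_i-\ee_j}\,f_e$. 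A summand is nonzero only when $\bb-\ee_i-\ee_j\in\NN^n$, which forces $i,j\in\mathrm{supp}(\bb)\subseteq W$, i.e.\ $e\subseteq W$; since $G_W$ is the induced subgraph, this means precisely $e\in E(G_W)$. Thus $(J_G)_\bb=\sum_{e\in E(G_W)}S_{\bb-\ee_i-\ee_j}\,f_e=(J_{G_W})_\bb$.

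Consequently the graded strands $(J_G\otimes_S K_\bullet)_\aaa$ and $(J_{G_W}\otimes_S K_\bullet)_\aaa$ coincide term by term, and their differentials agree as well (both are the Koszul differential acting by multiplication inside the common graded pieces). Taking homology in each degree then yields $\beta_{i,\aaa}(J_G)=\beta_{i,\aaa}(J_{G_W})$ for all $i$. I expect the only genuine content to be the support observation in the third paragraph; the rest is routine Koszul bookkeeping, and the single point demanding care is the verification that \emph{all} degrees $\bb$ appearing in the tensor complex satisfy $\bb\le\aaa$, so that the hypothesis $\mathrm{supp}(\aaa)\subseteq W$ really propagates to every summand.
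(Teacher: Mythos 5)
Your proof is correct, but it reaches the conclusion by a genuinely different route from the paper. The paper resolves the first argument of $\Tor$: it takes the minimal $\NN^n$-graded free resolution $\mathcal F$ of $S/J_G$, passes to the subcomplex $\mathcal F'$ spanned by the free summands $S(-\bb)$ with $\mathrm{supp}(\bb)\subset W$, and argues that $\mathcal F'$ is the minimal free resolution of $S/J_{G_W}$, so the two ideals share all Betti numbers in degrees supported in $W$. You instead resolve the second argument, computing $\Tor_i(J_G,K)_\aaa$ from the Koszul complex on the $2n$ variables and checking that the degree-$\aaa$ strands of $J_G\otimes_S K_\bullet$ and $J_{G_W}\otimes_S K_\bullet$ coincide term by term with identical differentials. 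Both arguments ultimately rest on the same elementary observation --- that every multidegree $\bb$ entering the computation satisfies $\bb\leq\aaa$, hence $\mathrm{supp}(\bb)\subset W$, and that $(J_G)_\bb=(J_{G_W})_\bb$ there --- but your version is symmetric in $G$ and $G_W$ from the outset and avoids the one delicate point in the paper's proof, namely justifying that $\mathcal F'$ is acyclic in \emph{all} multidegrees (the paper only verifies acyclicity of $\mathcal F'_\aaa$ for $\mathrm{supp}(\aaa)\subset W$, and the reduction to that case, e.g.\ via flatness of $S$ over $K[x_i,y_i: i\in W]$, is left implicit). What the paper's approach buys in exchange is a slightly stronger structural output: the minimal free resolution of $S/J_{G_W}$ is exhibited explicitly as a subcomplex of that of $S/J_G$, rather than only an equality of Betti numbers.
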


\begin{proof}
Let
$$
\mathcal F: 0 \longrightarrow \bigoplus_{\aaa \in \NN^n} S^{\beta_{p,\aaa}(J_G)}(-\aaa)
\longrightarrow \cdots 
\longrightarrow \bigoplus_{\aaa \in \NN^n} S^{\beta_{0,\aaa}(J_G)}(-\aaa)
\stackrel {\phi} \longrightarrow S
$$
be the $\NN^n$-graded minimal free resolution of $S/J_G$,
where $p$ is the projective dimension of $J_G$.
Consider its subcomplex
$$
\mathcal F': 0 \longrightarrow \bigoplus_{\aaa \in \NN^n \atop \mathrm{supp}(\aaa) \subset W} S^{\beta_{p,\aaa}(J_G)}(-\aaa)
\longrightarrow \cdots
\longrightarrow \bigoplus_{\aaa \in \NN^n\atop \mathrm{supp}(\aaa) \subset W} S^{\beta_{0,\aaa}(J_G)}(-\aaa)
\stackrel {\phi'} \longrightarrow S.
$$
We claim that $\mathcal F'$ is the minimal free resolution of $S/J_{G_W}$.
It is clear that $\mathrm{coker}\ \phi' = S/J_{G_W}$.
Hence what we must prove is that $\mathcal F'$ is acyclic.
To prove this, it is enough to show that the multigraded component $\mathcal F'_\aaa$ is acyclic
for any $\aaa \in \NN^n$ with $\mathrm{supp}(\aaa) \subset W$.

Let $\aaa\in \NN^n$ with $\mathrm{supp} (\aaa) \subset W$.
Since, for any $\bb \in \NN^n$,
$S(-\bb)_\aaa$ is non-zero if and only if $\aaa - \bb$ is non-negative,
we have
$$\mathcal F_\aaa = \mathcal F'_\aaa,$$
which implies that $\mathcal F'_\aaa$ is acyclic since $\mathcal F$ is a minimal free resolution.
\end{proof}

\begin{corollary}
\label{2.2}
With the same notation as in Lemma \ref{2.1},
one has $\beta_{i,j}(J_G) \geq \beta_{i,j}(J_{G_W})$ for all $i,j$.
\end{corollary}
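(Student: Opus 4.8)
The plan is to reduce the inequality for the $\NN$-graded Betti numbers to the identity of $\NN^n$-graded Betti numbers already supplied by Lemma \ref{2.1}. By definition $\beta_{i,j}(J_G)=\sum_{|\aaa|=j}\beta_{i,\aaa}(J_G)$ and $\beta_{i,j}(J_{G_W})=\sum_{|\aaa|=j}\beta_{i,\aaa}(J_{G_W})$, so it suffices to compare these two sums term by term.

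I would first establish the vanishing statement that $\beta_{i,\aaa}(J_{G_W})=0$ whenever $\mathrm{supp}(\aaa)\not\subset W$. This holds because each generator $x_iy_j-x_jy_i$ of $J_{G_W}$ has multidegree $\ee_i+\ee_j$ with $i,j\in W$, so none of the variables $x_k,y_k$ with $k\notin W$ occurs in $J_{G_W}$. Writing $R=K[x_i,y_i:i\in W]$ and letting $J'\subset R$ be the corresponding ideal, we have $S/J_{G_W}=S\otimes_R (R/J')$ with $S$ free over $R$; tensoring the minimal $R$-free resolution of $R/J'$ with $S$ yields the minimal $S$-free resolution of $S/J_{G_W}$, and all of its shifts are supported on $W$. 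This gives the claimed vanishing.

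With this in hand the inequality is immediate. Using the vanishing in the first equality and Lemma \ref{2.1} in the second, and then discarding the remaining nonnegative terms on the $J_G$ side, we obtain
$$\beta_{i,j}(J_{G_W})=\sum_{|\aaa|=j,\ \mathrm{supp}(\aaa)\subset W}\beta_{i,\aaa}(J_{G_W})=\sum_{|\aaa|=j,\ \mathrm{supp}(\aaa)\subset W}\beta_{i,\aaa}(J_G)\le\sum_{|\aaa|=j}\beta_{i,\aaa}(J_G)=\beta_{i,j}(J_G).$$
Since there is no genuinely hard step, the only point requiring care is the vanishing claim of the second paragraph; everything else is a routine comparison of sums of nonnegative integers.
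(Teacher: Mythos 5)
Your argument is correct and is exactly the (unwritten) argument the paper intends: sum the multigraded identity of Lemma \ref{2.1} over all $\aaa$ with $|\aaa|=j$, using that the Betti numbers of $J_{G_W}$ are concentrated in degrees supported on $W$. Note that your vanishing claim is already contained in the proof of Lemma \ref{2.1}, since the subcomplex $\mathcal F'$ constructed there is the minimal free resolution of $S/J_{G_W}$ and all of its shifts have support in $W$, so your base-change argument over $K[x_i,y_i: i\in W]$, while correct, re-derives something the paper already supplies.
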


\begin{corollary}
\label{2.3}
Let $G$ be a simple graph on $[n]$ and let $\ell$ be the length of the longest induced path of $G$.
Then $\reg (J_G) \geq \ell +1$.
\end{corollary}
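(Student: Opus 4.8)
The plan is to reduce the statement to a single induced path by means of Corollary \ref{2.2}, and then to determine the regularity of the binomial edge ideal of a path. Let $P$ be an induced path of $G$ of length $\ell$, and let $W\subseteq[n]$ be its set of vertices, so that $|W|=\ell+1$. Since $P$ is induced, the induced subgraph $G_W$ is exactly this path on $\ell+1$ vertices. By Corollary \ref{2.2} we have $\beta_{i,j}(J_G)\geq\beta_{i,j}(J_{G_W})$ for all $i,j$; in particular every pair $(i,j)$ with $\beta_{i,j}(J_{G_W})\neq 0$ also has $\beta_{i,j}(J_G)\neq 0$, so $\reg(J_G)\geq\reg(J_{G_W})$. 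It therefore suffices to show that $\reg(J_{G_W})\geq\ell+1$.

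Relabelling the vertices so that $P$ is the path $1-2-\cdots-(\ell+1)$, the ideal $J_{G_W}$ is generated by the $\ell$ quadrics $f_i=x_iy_{i+1}-x_{i+1}y_i$ for $1\leq i\leq\ell$. I would first check that these form a regular sequence. Taking the lexicographic order induced by $x_1>\cdots>x_{\ell+1}>y_1>\cdots>y_{\ell+1}$, the leading term of $f_i$ is $x_iy_{i+1}$, and these leading monomials are pairwise coprime; hence the $f_i$ form a Gr\"obner basis (all $S$-polynomials reduce to zero by the coprimality criterion) and $\init(J_{G_W})=(x_iy_{i+1}:1\leq i\leq\ell)$. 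Being generated by pairwise coprime monomials, $\init(J_{G_W})$ is a monomial complete intersection of height $\ell$. Since $\dim S/J_{G_W}=\dim S/\init(J_{G_W})$, the ideal $J_{G_W}$ has height $\ell$ as well, and being generated by $\ell$ elements it is a complete intersection.

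Granting this, the minimal free resolution of $S/J_{G_W}$ is the Koszul complex on $f_1,\dots,f_\ell$. Its top nonzero term sits in homological degree $\ell$ and internal degree $2\ell$, so $\beta_{\ell,2\ell}(S/J_{G_W})\neq 0$, equivalently $\beta_{\ell-1,2\ell}(J_{G_W})\neq 0$. Using the standard identity $\reg(J_{G_W})=\reg(S/J_{G_W})+1$, this yields $\reg(J_{G_W})\geq 2\ell-(\ell-1)=\ell+1$, which combined with the first paragraph gives $\reg(J_G)\geq\ell+1$.

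The only substantive point is the middle step, namely that the generators of the path ideal form a regular sequence; everything else is bookkeeping with Corollary \ref{2.2} and the degree shift between $J$ and $S/J$. I expect the pairwise coprimality of the leading monomials $x_iy_{i+1}$ to be what makes this step short, since it delivers both the Gr\"obner basis and the complete intersection property in one stroke. Alternatively, one could simply cite that binomial edge ideals of paths are complete intersections, but the initial-ideal argument seems to be the cleanest self-contained justification.
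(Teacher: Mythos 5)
Your proposal is correct and follows the same route as the paper: reduce to an induced path via Corollary \ref{2.2}, then use that the binomial edge ideal of a path of length $\ell$ is a complete intersection of $\ell$ quadrics with regularity $\ell+1$. The paper merely asserts these facts about the path, whereas you supply the (correct) Gr\"obner basis and Koszul complex details.
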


\begin{proof}
Observe that the binomial edge ideal of a path of length $\ell$
is a complete intersection having $\ell$ generators of degree $2$
and has the regularity $\ell+1$.
Then the statement follows from Corollary \ref{2.2}.
\end{proof}

\section{An upper bound}
In this section,  we prove an upper bound in Theorem \ref{1.1}.

We consider the $\NN^{2n}$-grading of $S$
defined by $\deg x_i = \ee_i$ and $\deg y_i= \ee_{i+n}$.
Binomial edge ideals are not $\NN^{2n}$-graded but monomial ideals in $S$ are $\NN^{2n}$-graded.
To simplify the notation,
we identify the multidegree $(\aaa,\bb) =(a_1,\dots,a_n,b_1,\dots,b_n)\in \NN^{2n}$
and a monomial $x^\aaa y^\bb=x_1^{a_1} \cdots x_n^{a_n} y_1^{b_1} \cdots y_n^{b_n}$,
and, for an $\NN^{2n}$-graded $S$-module $M$,
 write
$$\beta_{i,x^\aaa y^\bb}(M)= \beta_{i,(\aaa,\bb)}(M).$$
Also,
we write
$$P(M,t)= \sum_{k=0}^{2n} \sum_{(\aaa,\bb)\in \NN^{2n}} \beta_{k,(\aaa,\bb)}(M) x^\aaa y^\bb t^k$$
for the  ($\NN^{2n}$-graded) {\em Poincar\'e series} of $M$.

\begin{lemma}
\label{3.1}
Let $m_1,\dots,m_g$ be monomials in $S$ and $I=(m_1,\dots,m_g)$.
Then
$$
P(S/I,t) \leq 1 + \sum_{m_j \not \in (m_1,\dots,m_{j-1})}  P\big(S/\big( (m_1,\dots,m_{j-1}):m_j\big),t \big) m_j t,
$$
where the inequality is coefficient-wise.
\end{lemma}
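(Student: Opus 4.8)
The plan is to build $I$ one generator at a time and to track how the Poincar\'e series changes at each step via a short exact sequence. Write $I_0=(0)$ and $I_j=(m_1,\dots,m_j)$ for $1\le j\le g$, so that $I_g=I$ and $S/I_0=S$. For each $j$, multiplication by $m_j$ identifies $S/(I_{j-1}:m_j)$ (shifted by $\deg m_j$) with the submodule $I_j/I_{j-1}\subset S/I_{j-1}$, giving the $\NN^{2n}$-graded short exact sequence
$$0 \longrightarrow \big(S/(I_{j-1}:m_j)\big)(-\deg m_j) \stackrel{\cdot m_j}\longrightarrow S/I_{j-1} \longrightarrow S/I_j \longrightarrow 0.$$
Indeed the map $f\mapsto f m_j$ from $S$ to $S/I_{j-1}$ has image $I_j/I_{j-1}$ and kernel $(I_{j-1}:m_j)$. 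Since a colon of monomial ideals by a monomial is again a monomial ideal, every module appearing here is $\NN^{2n}$-graded, so I may work with multigraded Betti numbers throughout.

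Next I would feed this short exact sequence into the long exact sequence in $\Tor_\bullet(-,K)$. From the exact segment
$$\Tor_i(S/I_{j-1},K)_\alpha \longrightarrow \Tor_i(S/I_j,K)_\alpha \longrightarrow \Tor_{i-1}\big((S/(I_{j-1}:m_j))(-\deg m_j),K\big)_\alpha,$$
each graded piece $\Tor_i(S/I_j,K)_\alpha$ is a subquotient assembled from the two outer terms, whence the coefficient-wise bound
$$\beta_{i,\alpha}(S/I_j) \le \beta_{i,\alpha}(S/I_{j-1}) + \beta_{i-1,\,\alpha-\deg m_j}\big(S/(I_{j-1}:m_j)\big).$$
Translating into Poincar\'e series, the degree shift $-\deg m_j$ contributes the factor $m_j$ while the homological shift $i\mapsto i-1$ contributes the factor $t$, so the bound reads
$$P(S/I_j,t) \le P(S/I_{j-1},t) + P\big(S/(I_{j-1}:m_j),t\big)\, m_j t,$$
where the inequality is coefficient-wise.

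Finally I would iterate this estimate downward from $j=g$ to $j=1$, starting from $P(S/I_0,t)=P(S,t)=1$ and using that adding coefficient-wise nonnegative series preserves coefficient-wise inequalities, to obtain
$$P(S/I,t) \le 1 + \sum_{j=1}^g P\big(S/(I_{j-1}:m_j),t\big)\, m_j t.$$
Whenever $m_j\in I_{j-1}$ one has $(I_{j-1}:m_j)=S$, so $S/(I_{j-1}:m_j)=0$ and the corresponding summand vanishes; discarding these recovers precisely the sum restricted to $m_j\not\in(m_1,\dots,m_{j-1})$, as claimed. The only genuinely delicate point is the bookkeeping of the two shifts in the second step: one must verify that both the degree shift by $m_j$ and the homological shift by one survive the passage to the multigraded long exact sequence, so that they appear exactly as the factor $m_j t$ and not as $m_j$ or $t$ alone. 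Everything else is the routine additivity of dimension along a long exact sequence.
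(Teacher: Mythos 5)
Your proof is correct and follows essentially the same route as the paper: both hinge on the short exact sequence $0 \to \big(S/(I_{j-1}:m_j)\big)(-\deg m_j) \to S/I_{j-1} \to S/I_j \to 0$, with the paper extracting the coefficient-wise bound via the mapping cone construction and you extracting it, equivalently, from the long exact sequence in $\Tor(-,K)$. Your bookkeeping of the degree shift (the factor $m_j$) and the homological shift (the factor $t$), as well as the observation that the summands with $m_j\in(m_1,\dots,m_{j-1})$ vanish because the colon ideal is all of $S$, is accurate.
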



\begin{proof}
The assertion follows from the short exact sequence
$$
0 \longrightarrow
S/\big((m_1,\dots,m_{j-1}):m_j \big) \stackrel{\times m_j} \longrightarrow
S/(m_1,\dots,m_{j-1}) \longrightarrow
S/(m_1,\dots,m_{j}) \longrightarrow 0
$$
for $j=2,3,\dots,g$,
by mapping cone construction (cf.\ \cite[Construction 27.3]{P}).
\end{proof}

We now consider binomial edge ideals.
In the rest of this section,
we fix a simple graph $G$ on $[n]$.
We say that a path
$$P: s =v_0 \to v_1 \to \cdots \to v_r=t$$
of $G$ is {\em admissible} if $s<t$ and, for $k=1,2,\dots,r-1$, one has either $v_k<s$ or $v_k >t$.
The vertices $s$ and $t$ are called the {\em ends} of $P$
and the vertices $v_1,\dots,v_{r-1}$ are called the {\em inner vertices} of $P$.

For an admissible path $P: s = i_0 \to i_1 \to \cdots \to i_r=t$,
we define the monomial
$$m_P = \left( \prod_{v_k <s} y_{v_k} \right) \left( \prod_{v_k >t} x_{v_k} \right) x_s y_t.$$
Let $\mathcal P(G)$ be the set of all admissible paths of $G$,
and let $>_\lex$ be the lexicographic order induced by $x_1> \cdots>x_n > y_1 > \cdots >y_n$.
For an ideal $I \subset S$,
let $\init_{>_\lex}(I)$ be the initial ideal of $I$ w.r.t.\ $>_\lex$.
The following result is due to Herzog et.al.\ \cite[Theorem 2.1]{HHHKR} and Ohtani \cite[Theorem 3.2]{Oh}.

\begin{lemma}
\label{3.2}
$\init_{>_\lex}(J_G)= (m_P: P \in \mathcal P (G))$.
\end{lemma}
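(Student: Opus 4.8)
The plan is to prove the stronger statement that the binomials
$$g_P := \Big(\prod_{v_k<s} y_{v_k}\Big)\Big(\prod_{v_k>t} x_{v_k}\Big)(x_sy_t-x_ty_s), \qquad P: s=i_0\to\cdots\to i_r=t\in\mathcal P(G),$$
form a Gr\"obner basis of $J_G$ with respect to $>_\lex$, with $\init_{>_\lex}(g_P)=m_P$. Once this is established the lemma is immediate, since the initial ideal of an ideal is generated by the leading terms of any Gr\"obner basis of it. Write $f_{ab}=x_ay_b-x_by_a$, so that $J_G=(f_{ab}:\{a,b\}\in E(G))$ and $g_P=c_Pf_{st}$, where $c_P$ is the displayed monomial in the inner vertices.

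First I would check that each $g_P$ lies in $J_G$ and has the claimed leading term. Membership rests on the two elementary identities
$$x_b f_{ac}=x_a f_{bc}+x_c f_{ab}, \qquad y_b f_{ac}=y_a f_{bc}+y_c f_{ab},$$
valid for any triple $a,b,c$ (both follow by expanding the right-hand sides). Granting these, I would induct on $r$: applying the appropriate identity to the triple $(s,i_{r-1},t)$ rewrites $f_{st}$ as a combination of the edge minor $f_{i_{r-1}t}$, which lies in $J_G$, and of $f_{s,i_{r-1}}$; multiplying through by the remaining variables then reduces the claim for $P$ to membership of the corresponding binomial for the shorter walk $s\to\cdots\to i_{r-1}$, the base case $r=1$ being the edge minor itself. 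Notice that admissibility plays no role in membership — the argument works for an arbitrary walk and an arbitrary choice of inner variables — but it is exactly what makes $c_P$ squarefree and forces $\init_{>_\lex}(g_P)=c_P\cdot\init_{>_\lex}(f_{st})=c_P\,x_sy_t=m_P$, using that $s<t$ gives $\init_{>_\lex}(f_{st})=x_sy_t$. In particular the single-edge paths recover the generators $f_{ab}$, so $\{g_P\}$ generates $J_G$, and we already obtain $(m_P:P\in\mathcal P(G))\subseteq\init_{>_\lex}(J_G)$.

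The substance of the proof is the reverse inclusion, which I would obtain via Buchberger's criterion: it suffices to show that the $S$-polynomial of every pair $g_P,g_Q$ reduces to $0$ modulo $\{g_R:R\in\mathcal P(G)\}$. By the product (gcd) criterion, pairs with $\gcd(m_P,m_Q)=1$ may be discarded, so the work is confined to pairs whose leading monomials share a variable. A shared variable forces the two admissible paths to meet at a common vertex in compatible roles — a common end, or a common high or low inner vertex — and in each such configuration one splices the relevant segments of $P$ and $Q$ at that vertex into one or more new admissible paths $R$, whose binomials $g_R$ are precisely what is needed to rewrite the $S$-polynomial down to zero. Carrying out this case analysis — enumerating the ways two admissible paths can overlap and exhibiting the splicing in each case — is the main obstacle, and is where the combinatorial content of the theorems of Herzog et.\ al.\ and Ohtani resides.

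An alternative finish that avoids the $S$-pair bookkeeping is to observe that passing to an initial ideal preserves Hilbert functions, so the inclusion already proved yields $\Hilb(S/(m_P:P\in\mathcal P(G)))\ge\Hilb(S/J_G)$; equality of the two ideals is then equivalent to equality of these two Hilbert functions. I would expect proving that Hilbert-function identity directly to be no easier than the $S$-pair analysis, however, so my primary route remains Buchberger's criterion.
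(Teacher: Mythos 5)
The paper offers no proof of this lemma: it is quoted from \cite[Theorem 2.1]{HHHKR} and \cite[Theorem 3.2]{Oh}, with a relaxed notion of admissibility that (as the authors note) only makes the generating set possibly non-minimal. So the real comparison is between your proposal and the proofs in those references, and your overall strategy is indeed theirs: exhibit binomials $g_P$ with $\init_{>_\lex}(g_P)=m_P$ and verify Buchberger's criterion. The half you actually carry out is correct and complete. The two rewriting identities hold, the induction along the walk shows $g_P\in J_G$ (and you are right that admissibility is irrelevant for membership), and $s<t$ forces $\init_{>_\lex}(g_P)=c_P x_s y_t=m_P$, which gives the inclusion $(m_P: P\in\mathcal P(G))\subseteq \init_{>_\lex}(J_G)$.

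The reverse inclusion, however, is where the entire content of the lemma lives, and your proposal stops exactly there: ``carrying out this case analysis \dots is the main obstacle'' is an accurate self-assessment, but it means the proof is not done. Two points in the sketch are also more delicate than stated. First, a common variable of $m_P$ and $m_Q$ need not arise from ``compatible roles'': $x_v$ divides $m_P$ either because $v$ is the smaller end of $P$ or because $v$ is an inner vertex of $P$ exceeding its larger end, and the two paths may realize $v$ in different ways, so mixed end/inner configurations must be enumerated as well. Second, splicing segments of $P$ and $Q$ at a common vertex generally yields a walk that is not admissible (it can revisit vertices, and its inner vertices need not all lie outside the interval determined by the new ends), so one must further extract admissible subpaths --- essentially the wedge construction of Lemma \ref{3.3} --- before the $S$-polynomial reduces to zero. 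These are precisely the difficulties that occupy the bulk of the arguments in \cite{HHHKR} and \cite{Oh}. As it stands, your argument establishes one inclusion and correctly identifies, but does not supply, the proof of the other.
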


Note that our definition of the admissibility is different to that in \cite{HHHKR}.
In particular, the generators in Lemma \ref{3.2} may not be minimal.

The next property is a key lemma to prove the main result.

\begin{lemma}
\label{3.3}
Let $P: s=v_0 \to \cdots \to v_r=t$ be an admissible path and $1 \leq k \leq r-1$.
\begin{itemize}
\item[(i)]
If $v_k<s$ then there is an $\ell >k$ such that
$P': v_k \to v_{k+1} \to \cdots \to v_\ell$
is an admissible path of $G$ and $m_{P'}$ divides $x_{v_k} m_P$.
\item[(ii)]
If $v_k>t$ then there is an $\ell <k$ such that
$P': v_\ell \to v_{\ell+1} \to \cdots \to v_k$
is an admissible path of $G$ and $m_{P'}$ divides $y_{v_k} m_P$.
\end{itemize}
\end{lemma}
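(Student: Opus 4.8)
The plan is to prove part (i) in detail and obtain part (ii) from it by the order-reversing symmetry: the $K$-algebra involution of $S$ sending $x_i \mapsto y_{n+1-i}$ and $y_i \mapsto x_{n+1-i}$, combined with reversing the orientation of paths, carries admissible paths to admissible paths and sends $m_P$ to $m_{\tilde P}$ of the reversed relabelled path, while interchanging the hypotheses $v_k<s$ and $v_k>t$ and interchanging the factors $x_{v_k}$ and $y_{v_k}$. Concretely, this means (ii) can also be proved directly by the mirror construction, choosing
$$\ell = \max\{\, j : 0 \leq j < k,\ s \leq v_j < v_k \,\},$$
so from now on I assume $v_k < s$ as in (i).

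The crux is the choice of the endpoint $v_\ell$. The naive choice, the first index $j>k$ with $v_j > v_k$, does make $P'$ admissible (all intermediate vertices then lie below $v_k$), but it can violate the divisibility: if that first larger vertex satisfies $v_\ell > t$, then $m_{P'}$ contains $y_{v_\ell}$ while $x_{v_k}m_P$ contains only $x_{v_\ell}$. Instead I would set
$$\ell = \min\{\, j : k < j \leq r,\ v_k < v_j \leq t \,\},$$
which is well defined since $j=r$ gives $v_k < v_r = t \leq t$. The point of the upper cutoff $v_j \leq t$ is to step over the ``high'' vertices $v_j>t$, which will become inner vertices of $P'$ contributing $x$-variables that are already present in $m_P$, and to stop at the first vertex in the interval $(v_k,t]$.

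With this $\ell$ the two required properties are bookkeeping. For admissibility of $P':v_k\to\cdots\to v_\ell$ (with $v_k<v_\ell$ by construction): minimality of $\ell$ forces every intermediate vertex $v_j$ to satisfy $v_j<v_k$ or $v_j>t\geq v_\ell$, hence $v_j<v_k$ or $v_j>v_\ell$, which is exactly admissibility. For $m_{P'}\mid x_{v_k}m_P$ I would match factors: the factor $x_{v_k}$ is furnished by the extra $x_{v_k}$; each $y_{v_j}$ from an intermediate $v_j<v_k<s$ sits in $m_P$ since $v_j$ is an inner vertex of $P$ below $s$; each $x_{v_j}$ from an intermediate $v_j>v_\ell$ is present because such a $v_j$ must exceed $t$; and the end factor $y_{v_\ell}$ is accounted for because either $\ell=r$ and $v_\ell=t$, or else $\ell<r$, where $v_\ell\leq t$ together with $v_\ell\neq t$ forces $v_\ell<s$, so in either case $v_\ell$ contributes $y_{v_\ell}$ to $m_P$. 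The hard part is precisely locating this definition of $\ell$: one has to realize that the endpoint must be sought in $(v_k,t]$ rather than at the first strict ascent, so that the large end of $P'$ contributes a $y$-variable already supplied by $m_P$; once the interval cutoff is in hand, everything else reduces to sorting the vertices into the three ranges $<s$, $(v_k,t]$, and $>t$.
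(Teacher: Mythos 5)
Your choice of $\ell$ --- the smallest $j>k$ with $v_k<v_j\leq t$ --- is exactly the one the paper uses, and your verification of admissibility and of the divisibility $m_{P'}\mid x_{v_k}m_P$ (which the paper leaves to the reader) is correct, as is the mirror construction for (ii). This is essentially the paper's proof, carried out in full detail.
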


\begin{proof}
We prove (i) (the proof for (ii) is similar).
Let $\ell>k$ be the smallest integer satisfying $ i_k < i_\ell \leq t$.
Then the path 
$P': v_k \to v_{k+1} \to \cdots \to v_\ell$
satisfies the desired condition.
\end{proof}

We call a path $P'$ satisfying condition (i) or (ii) in Lemma \ref{3.3} an {\em wedge} of $P$ at $v_k$.

From now on,
we fix an ordering
$$P_1,P_2,\dots,P_g$$
of the admissible paths of $G$,
where $g=\# \mathcal P (G)$,
such that if the length of $P_i$ is smaller than that of $P_j$ then $i<j$.
To simplify the notation, we write
$$m_k=m_{P_k}$$
for $k=1,2,\dots,g$.
Then $\init_{>_\lex}(J_G)=(m_1,\dots,m_g)$.
By the choice of the ordering, if $P_i$ is an wedge of $P_j$ then $i<j$.
This fact immediately implies the following property.

\begin{lemma}
\label{3.4}
Let $1 < j \leq g$ and let $s$ and $t$ be the ends of $P_j$ with $s<t$.
For any inner vertex $v$ of $P_j$,
one has $x_v \in (m_1,\dots,m_{j-1}):m_j$ if $v <s$
and $y_v \in (m_1,\dots,m_{j-1}):m_j$ if $v >t$.
\end{lemma}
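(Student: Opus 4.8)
The statement to prove is Lemma 3.4. Let me understand what it's claiming.The plan is to deduce the two membership statements directly from Lemma~\ref{3.3} together with the ordering convention on $P_1,\dots,P_g$. By the definition of the colon ideal, proving $x_v \in (m_1,\dots,m_{j-1}):m_j$ is the same as proving $x_v m_j \in (m_1,\dots,m_{j-1})$, and likewise for $y_v$. So I would fix an inner vertex $v$ of $P_j$, write $P_j: s=v_0 \to \cdots \to v_r = t$, and split into the two cases $v<s$ and $v>t$, treating the first in detail since the second is symmetric.

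For the case $v<s$, set $v=v_k$ with $1 \le k \le r-1$. Applying Lemma~\ref{3.3}(i) produces a wedge $P'$ of $P_j$ at $v_k$, that is, an admissible path $v_k \to v_{k+1} \to \cdots \to v_\ell$ with $\ell>k$, such that $m_{P'}$ divides $x_{v_k} m_{P_j} = x_v m_j$. The key point to verify is that $P'$ occurs earlier than $P_j$ in the fixed ordering. Since $k \ge 1$, the path $P'$ is a proper subpath of $P_j$ and is therefore strictly shorter, so by the choice of the ordering $P'=P_i$ for some $i<j$; this is exactly the observation recorded immediately before the statement. Consequently $m_{P'}=m_i \in (m_1,\dots,m_{j-1})$, and since $m_i$ divides $x_v m_j$ we get $x_v m_j \in (m_i) \subset (m_1,\dots,m_{j-1})$, which is precisely $x_v \in (m_1,\dots,m_{j-1}):m_j$.

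The case $v>t$ runs identically using Lemma~\ref{3.3}(ii): it yields a wedge $P'$ at $v$ with $m_{P'}$ dividing $y_v m_j$, and the same length comparison forces $P'=P_i$ with $i<j$, whence $y_v m_j \in (m_1,\dots,m_{j-1})$. This completes the argument once both cases are in hand.

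I do not expect a genuine obstacle here, as this is essentially a bookkeeping step; indeed the paper flags it as an immediate consequence. The only point requiring a moment of care is confirming that the wedge supplied by Lemma~\ref{3.3} really carries an index smaller than $j$. This rests on two facts: a wedge taken at an \emph{inner} vertex is a proper, hence strictly shorter, admissible subpath, and the enumeration $P_1,\dots,P_g$ was chosen to respect path length. Everything else follows mechanically from the divisibility assertion in Lemma~\ref{3.3} and the definition of the colon ideal.
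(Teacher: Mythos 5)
Your proof is correct and is exactly the argument the paper intends: the paper gives no separate proof of Lemma \ref{3.4}, stating only that it follows immediately from Lemma \ref{3.3} together with the observation that a wedge of $P_j$, being strictly shorter, precedes $P_j$ in the chosen ordering. You have simply written out that deduction in full, including the correct reduction of the colon-ideal membership to the divisibility statement of Lemma \ref{3.3}.
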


For a monomial $w \in S$, let
$$\mult(w)= \{ k \in[n]: x_k y_k \mbox{ divides }w\}.$$
Note that, for a squarefree monomial $w \in S$,
one has $ \deg w \leq n+ \# \mult(w)$.
Since the regularity does not decrease under taking initial ideals (see e.g., \cite[Theorem 22.9]{P}),
the next statement proves the remaining part of Theorem \ref{1.1}.

\begin{proposition}
\label{3.5}
For any monomial $w \in S$ and an integer $p>0$,
one has
$$\beta_{p,w}\left( S/\init_{>_\lex}(J_G)\right) =0
\ \ \mbox{ if } \# \mult(w) \geq p.$$
In particular,
$\reg( \init_{>_\lex}(J_G)) \leq n$.
\end{proposition}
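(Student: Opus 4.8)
The plan is to deduce the ``in particular'' statement from the main vanishing, and to prove the vanishing by induction through the Poincar\'e-series recursion of Lemma~\ref{3.1}. First I would record that $\init_{>_\lex}(J_G)$ is a squarefree monomial ideal, since each $m_P$ is squarefree; hence its multigraded Betti numbers are supported on squarefree monomials $w$, for which $\deg w \le n + \#\mult(w)$ as noted above. Granting the vanishing, $\beta_{p,w}\neq 0$ forces $\#\mult(w)\le p-1$, so $\deg w - p \le n-1$, which gives $\reg(S/\init_{>_\lex}(J_G)) \le n-1$ and therefore $\reg(\init_{>_\lex}(J_G))\le n$.

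For the vanishing I would argue by induction on $g=\#\mathcal P(G)$, using Lemma~\ref{3.1} in the form
$$\beta_{p,w}\!\left(S/\init_{>_\lex}(J_G)\right) \ \le\ \sum_{j:\, m_j\mid w} \beta_{p-1,\,w/m_j}\!\left(S/J_j\right),\qquad J_j=(m_1,\dots,m_{j-1}):m_j,$$
the sum running over those $j$ for which $m_j$ is a minimal generator. It suffices to show each summand vanishes when $\#\mult(w)\ge p$; fix such a $j$, write $P_j\colon s=v_0\to\cdots\to v_r=t$ with $s<t$, and set $w'=w/m_j$ (we may assume $w$, hence $w'$, squarefree, so $\mathrm{supp}(w')\cap\mathrm{supp}(m_j)=\emptyset$). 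By Lemma~\ref{3.4}, $J_j$ contains the $r-1$ distinct variables $z(v)$, one per inner vertex $v$ of $P_j$, with $z(v)=x_v$ if $v<s$ and $z(v)=y_v$ if $v>t$. Here I would invoke the elementary fact that if a variable $z$ is a minimal generator of a monomial ideal $J$, then no other minimal generator is divisible by $z$, whence $\beta_{i,u}(S/J)=0$ whenever $z^2\mid u$, and when $z\mid u$ one has $\beta_{i,u}(S/J)=\beta_{i-1,\,u/z}(S'/J')$ with $S'=S/(z)$ and $J'$ the image of $J$. Peeling off all the $z(v)$ in turn reduces $\beta_{p-1,w'}(S/J_j)$ to $\beta_{p-1-d,\,\tilde w}(\tilde S/\widetilde{J_j})$, where $d$ is the number of $z(v)$ dividing $w'$, the ring $\tilde S=S/(z(v)\colon v \text{ inner})$, and $\tilde w$ is $w'$ with those variables deleted.

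The next step is a bookkeeping of $\mult$. Using the disjointness of supports one checks directly that an inner vertex $v$ lies in $\mult(w)$ exactly when $z(v)\mid w'$, so that $d=\#\bigl(\mult(w)\cap\{\text{inner vertices of }P_j\}\bigr)$; that an off-path index lies in $\mult(w)$ exactly when it lies in $\mult(w')=\mult(\tilde w)$; and that the only remaining possible contributions to $\mult(w)$ come from the two ends, with $s\in\mult(w)\iff y_s\mid w'$ and $t\in\mult(w)\iff x_t\mid w'$. Writing $\varepsilon\in\{0,1,2\}$ for the number of ends in $\mult(w)$, this yields the exact decomposition $\#\mult(w)=d+\varepsilon+\#\mult(\tilde w)$. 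Since $\widetilde{J_j}$ has fewer generators than the original ideal and is again generated by divisors of the $m_i$, I would feed it into the inductive bound, obtaining $\#\mult(\tilde w)\le(p-1-d)-1$ and hence $\#\mult(w)\le p-2+\varepsilon$.

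The main obstacle is precisely this end term $\varepsilon$. When $\varepsilon\le 1$ the estimate gives $\#\mult(w)\le p-1$, contradicting $\#\mult(w)\ge p$ and finishing the summand; the delicate case is $\varepsilon=2$, which only yields $\#\mult(w)\le p$. I expect to rule this case out for a \emph{nonvanishing} summand by a support (lcm-lattice) argument: if both $y_s\mid w'$ and $x_t\mid w'$, I would show that $w'$ cannot be a join of minimal generators of $J_j$, the intuition being that capping $P_j$ at both ends completes it to a longer admissible path, which by the chosen ordering has already contributed an earlier generator and forces $\beta_{p-1,w'}(S/J_j)=0$. Making this implication precise, and formulating the induction hypothesis so that it correctly records the behaviour of $\mult$ at the ends $s$ and $t$ (rather than only at inner and off-path vertices), is where the real work of the proof lies.
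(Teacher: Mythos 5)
Your skeleton --- iterate Lemma~\ref{3.1}, use Lemma~\ref{3.4} to strip off the inner-vertex variables, and keep books on $\mult$ --- is the same as the paper's, but the two places you yourself flag as ``the real work'' are genuine gaps, and the second one cannot be closed the way you suggest. First, the induction on $g$ does not parse as stated: after one application of Lemma~\ref{3.1} you are looking at $J_j=(m_1,\dots,m_{j-1}):m_j$ (and then its image $\widetilde{J_j}$ modulo some variables), which is not $\init_{>_\lex}(J_{G'})$ for any graph $G'$, so Proposition~\ref{3.5} cannot be ``fed'' these ideals. You must enlarge the statement to a class of ideals closed under the operations you perform. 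The paper does exactly this: it works with colon ideals of the form $(m_1,\dots,m_{i_1-1}):m_{i_1}\cdots m_{i_k}$ indexed by what it calls Lyubeznik subsets $F=\{m_{i_1},\dots,m_{i_k}\}$, and the crucial structural point (Claim~2, Case~2 of the paper) is that applying Lemma~\ref{3.1} to such an ideal, whose generators are $\overline m_\ell=m_\ell/\gcd(m_\ell,m_{i_1}\cdots m_{i_k})$, produces a further colon ideal $(m_1,\dots,m_{i_0-1}):m_{i_0}m_{i_1}\cdots m_{i_k}$ of the \emph{same} shape, with $F$ enlarged to $\widetilde F=F\cup\{m_{i_0}\}$. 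No such closure property is visible in your setup.

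Second, and more seriously, your $\varepsilon=2$ case is not ruled out by the lcm-lattice argument you sketch. Both $y_s$ and $x_t$ can divide minimal generators $\overline m_i$ of $J_j$ (for instance when $s$ occurs as the larger end, or as an inner vertex below the smaller end, of an earlier admissible path $P_i$), so a monomial $w'$ divisible by $y_sx_t$ can perfectly well be a join of minimal generators of $J_j$; nothing forces $\beta_{p-1,w'}(S/J_j)=0$. The paper never needs a per-step bound on the contribution of the ends. Instead it tracks the exact invariant $\#\mult(w\cdot\lcm(F))-\#F$, shows it drops by exactly one per homological step (condition (c') of Claim~2), and only at homological degree $0$ invokes a \emph{global} count (Claim~1(ii)): for a Lyubeznik subset of size $k$ whose paths have ends $s_1<t_1,\dots,s_k<t_k$, one has $\#\mult(x_{s_1}y_{t_1}\cdots x_{s_k}y_{t_k})\le k-1$. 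This amortizes the ``two ends per path'' cost over the whole collection --- each vertex in $\mult$ must be simultaneously some $s_a$ and some $t_b$ with $s_a<t_a$ and $s_b<t_b$, which caps the total at $k-1$ --- and is precisely the mechanism your per-summand estimate $\#\mult(w)\le p-2+\varepsilon$ is missing. Until you supply a substitute for this global counting (and the Lyubeznik-subset bookkeeping that makes it applicable), the proof does not go through.
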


\begin{proof}
The second statement follows from the first statement together with the fact that the multigraded Betti numbers
of a squarefree ideal is concentrated in squarefree degrees.
Thus we prove the first statement.

We first introduce notations.
Let $\mathcal M=\{m_1,m_2,\dots,m_g\}$.
We say that a subset $F=\{m_{i_1},m_{i_2},\dots,m_{i_k}\} \subset \mathcal M$, where $i_1< \cdots <i_k$,
is a {\em Lyubeznik subset} of $\mathcal M$ (of size $k$) if, for $j=1,2,\dots,k$,
any monomial $m_\ell$ with $\ell < i_j$ does not divide $\lcm(m_{i_j},m_{i_{j+1}}, \dots, m_{i_k})$.
We prove the assertion by the following two claims.
\medskip

\noindent
\textbf{Claim 1.}
Let $F=\{m_{i_1},\dots,m_{i_k}\}$, where $i_1< \cdots <i_k$, be a Lyubeznik subset of $\mathcal M$.
Then
\begin{itemize}
\item[(i)] $\mult(\lcm(F))$ contains no inner vertices of $P_{i_1}$.
\item[(ii)] if $\mult(\lcm(F))$ contains no inner vertices of $P_{i_j}$ for $j=2,3,\dots,k$
then $\# \mult(\lcm(F)) \leq k-1$.
\end{itemize}
\medskip

\noindent
\textbf{Claim 2.}
Let $F=\{m_{i_1},\dots,m_{i_k}\}$, where $i_1< \cdots <i_k$, be a Lyubeznik subset of $\mathcal M$
and $w$ a monomial of $S$.
Let $p>0$ be an integer.
Suppose
\begin{itemize}
\item[(a)] $\beta_{p,w}(S/((m_1,\dots,m_{i_1-1}):m_{i_1} \cdots m_{i_k}) ) \ne 0$, and
\item[(b)] $\mult(w \cdot  \lcm(F))$ contains no inner vertices of $P_{i_\delta}$ for $\delta=2,3,\dots,k$.
\end{itemize}
Then there is a Lyubeznik subset $\widetilde F=\{m_{j_1},\dots,m_{j_\ell}\}$, where $j_1 < \cdots < j_\ell$,
of $\mathcal M$ and a monomial $\widetilde w$ such that
\begin{itemize}
\item[(a')] $\beta_{p-1,\widetilde w}(S/((m_1,\dots,m_{j_1-1}):m_{j_1} \cdots m_{j_\ell})) \ne 0$,
\item[(b')] $\mult(\widetilde w \cdot  \lcm( \widetilde F))$ contains no inner vertices of $P_{j_\delta}$ for $\delta=2,3,\dots,\ell$, and
\item[(c')] $\# \mult(\widetilde w \cdot  \lcm (\widetilde F))- \# \widetilde F =\# \mult(w\cdot   \lcm(F)) - \#F -1$.
\end{itemize}
\medskip

We first show that these claims prove the desired statement.
Let $u \in S$ be a monomial such that $\beta_{p,u}(S/\init_{>_\lex}(J_G)) \ne 0$ with $p>0$.
We show that there is a Lyubeznik subset $F$
such that 
\begin{align}
\label{c0}
\# \mult(u)=\# \mult (\lcm(F)) -\#F + p
\end{align}
and $F$ satisfies the assumption of Claim 1(ii).
Note that this proves the desired statement by Claim 1(ii).

Recall $\init_{>_\lex}(J_G)=(m_1,\dots,m_g)$.
By Lemma \ref{3.1},
there is a Lyubeznik subset $\{m_j\}$ of size $1$ such that $\beta_{p-1,u/m_j}(S/((m_1,\dots,m_{j-1}):m_j)) \ne 0$.
If $p=1$ then $u=m_j$ and the set $\{m_j\}$ has the desired property \eqref{c0}.
Suppose $p>1$.
Then the pair of the Lyubeznik set $\{m_j\}$ and a monomial $u/m_j$ satisfies the assumption (a) and (b) of Claim 2.
Thus, by applying Claim 2 repeatedly,
one obtains a Lyubeznik subset $F=\{m_{i_1},\dots,m_{i_k}\}$ and a monomial $w$ such that
\begin{itemize}
\item $\beta_{0,w}(S/((m_1,\dots,m_{i_1-1}):m_{i_1} \cdots m_{i_k})) \ne 0$, and
\item $\# \mult(w \cdot \lcm(F))-\#F = \# \mult (u) -p$.
\end{itemize}
The first condition says $w=x^{\mathbf 0} y^{\mathbf 0}$,
where $\mathbf 0=(0,\dots,0)$,
and the second condition proves that $F$ satisfies the desired property \eqref{c0}.

In the rest, we prove Claims 1 and 2.
\begin{proof}[Proof of Claim 1]
(i) Suppose to the contrary that there is an inner vertex $v$ of $P_{i_1}$
which belongs to $\mult(\lcm(F))$.
Let $P_j$ be a wedge of $P_{i_1}$ at $v$.
Then $j<i_1$ and $m_j$ divides $\lcm(m_{i_1},\dots,m_{i_k})$ by Lemma \ref{3.3}.
This contradicts the definition of Lyubeznik sets.

(ii)
Let $s_1,t_1,s_2,t_2,\dots,s_k,t_k$ be the ends of $P_{i_1},\dots,P_{i_k}$,
where $s_j<t_j$ for all $j$.
By (i) and the assumption, $\mult(\lcm(F))$ contains no inner vertices of $P_{i_j}$ for all $j$.
Hence
$$\# \mult(\lcm(F)) \leq \# \mult(x_{s_1}y_{t_1} x_{s_2}y_{t_2} \cdots x_{s_k}y_{t_k} )\leq k-1,$$
where the last inequality follows from $s_1<t_1,\dots,s_k<t_k$.
\end{proof}

\noindent
\textit{Proof of Claim 2.}
We consider two cases.

\textit{Case 1:}
Suppose that $\mult(w \cdot \lcm(F))$ contains an inner vertex $v$ of $P_{i_1}$.
Consider the case that $x_v$ divides $m_{i_1}$
(the case that $y_v$ divides $m_{i_1}$ is similar).
Since $y_v$ does not divide $\lcm(F)$ by Claim 1(i),
$y_v$ divides $w$.
Then, as $y_v \in (m_1,\dots,m_{i_1-1}):m_{i_1} \cdots m_{i_k}$ by Lemma \ref{3.4},
we have $\beta_{p,w}(S/((m_1,\dots,m_{i_1-1}):m_{i_1} \cdots m_{i_k})) \ne 0$
if and only if $\beta_{p-1,w/y_v}(S/((m_1,\dots,m_{i_1-1}):m_{i_1} \cdots m_{i_k})) \ne 0$.
Then the pair of the set $\widetilde F=F$ and the monomial $\widetilde w = w /y_v$ satisfies (a'), (b') and (c')
as desired.

\textit{Case 2:}
Suppose that $\mult(w \cdot \lcm(F))$ contains no inner vertices of $P_{i_1}$.
For $j=1,2,\dots,i_1-1$,
let
$$\overline{m}_j = \frac {m_j} {\mathrm{gcd}(m_j,m_{i_1}\cdots m_{i_k})}.$$
Then we have
$$(\overline{ m}_1,\dots,\overline{ m}_{i_1-1}) = (m_1,\dots,m_{i_1-1}):m_{i_1} \cdots m_{i_k}.$$
By Lemma \ref{3.1} and (a),
there is an $1 \leq i_0 < i_1$ such that $\overline{m}_{i_0} \not \in (\overline{m}_1,\dots,\overline{m}_{i_0-1})$
and
\begin{align}
\label{c1}
\beta_{p-1,w/\overline{m}_{i_0}} \left( S/ \big( (\overline{m}_1,\dots,\overline{m}_{i_0-1}): \overline{m}_{i_0}\big) \right) \ne 0.
\end{align}
Let $\widetilde w = w /\overline{m}_{i_0}$ and $\widetilde F=\{m_{i_0},m_{i_1}\dots,m_{i_k}\}$.
Since, for $\ell <i_0$, $\overline{m}_\ell$ divides $\overline{m}_{i_0}$ if and only if $m_\ell$ divides $\lcm(m_{i_0}, m_{i_1}, \dots, m_{i_k})$,
$\widetilde F$ is a Lyubeznik subset.
Also,
since
\begin{align*}
(\overline{m}_1,\dots,\overline{m}_{i_0-1}):\overline{m}_{i_0}  = (m_1,\dots,m_{i_0-1}):m_{i_0}m_{i_1} \cdots m_{i_k},
\end{align*}
\eqref{c1} and	 the fact $w \cdot \lcm(F)= \widetilde w \cdot \lcm (\widetilde F)$
say that the pair $\widetilde F$ and $\widetilde w$ satisfies (a'), (b') and (c') as desired.
\end{proof}

\begin{remark}
Although we use conditions that appear in Lyubeznik resolutions \cite{Ly},
Lyubeznik resolutions themselves seem not to prove Proposition \ref{3.5}.
\end{remark}

\begin{example}
Both inequalities in Theorem \ref{1.1} could be strict.
Indeed,
the regularity of the binomial edge ideal of the following graph is $6$.
However, the graph has $7$ vertices and the length of its longest induced path is $4$.
\bigskip
\begin{center}
\unitlength 0.1in
\begin{picture}(  8.8000,  4.8000)(  9.6000,-12.4000)
%
\special{pn 8}%
\special{pa 1400 1200}%
\special{pa 1800 1200}%
\special{fp}%
\special{pa 1400 1200}%
\special{pa 1400 800}%
\special{fp}%
\special{pa 1400 1200}%
\special{pa 1000 1200}%
\special{fp}%
%
\special{pn 8}%
\special{sh 0}%
\special{ar 1000 1200 40 40  0.0000000 6.2831853}%
%
\special{pn 8}%
\special{sh 0}%
\special{ar 1200 1200 40 40  0.0000000 6.2831853}%
%
\special{pn 8}%
\special{sh 0}%
\special{ar 1400 1200 40 40  0.0000000 6.2831853}%
%
\special{pn 8}%
\special{sh 0}%
\special{ar 1400 1000 40 40  0.0000000 6.2831853}%
%
\special{pn 8}%
\special{sh 0}%
\special{ar 1400 800 40 40  0.0000000 6.2831853}%
%
\special{pn 8}%
\special{sh 0}%
\special{ar 1600 1200 40 40  0.0000000 6.2831853}%
%
\special{pn 8}%
\special{sh 0}%
\special{ar 1800 1200 40 40  0.0000000 6.2831853}%
\end{picture}%
\end{center}
\end{example}

\begin{remark}
A similar bound holds for the depth of $S/J_G$.
Let $K_n$ be the complete graph on $[n]$.
If $G$ is a connected graph on $[n]$
then $J_{K_n}$ is an associated prime of $S/J_G$ by \cite[Corollary 3.9]{HHHKR}
and $\dim S/J_{K_n}=n+1$.
This fact implies $\mathrm{depth} (S/J_G) \leq n+1$ (see \cite[Propositon 1.2.13]{BH}).
\end{remark}

We end this note with the following conjecture.

\begin{conjecture}
\label{conj}
Let $G$ be a graph on $[n]$.
If $\reg(J_G)=n$ then $G$ is a path of length $n$.
\end{conjecture}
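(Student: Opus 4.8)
The plan is to prove the contrapositive in the sharp form: if $G$ is connected and is not the path $P_n$ on all its vertices (the intended meaning of ``path of length $n$'', which has length $n-1$ and satisfies $\reg(J_{P_n})=n$ by Corollary~\ref{2.3}), then $\reg(J_G)\le n-1$. First I would reduce to connected $G$. If $G$ has connected components $G_1,\dots,G_c$ on disjoint vertex sets of sizes $n_1,\dots,n_c$, the variable sets split and $S/J_G\cong (S_1/J_{G_1})\otimes_K\cdots\otimes_K (S_c/J_{G_c})$; by K\"unneth the multigraded Betti numbers multiply, so regularity is additive: $\reg(S/J_G)=\sum_i\reg(S_i/J_{G_i})$. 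Since $\reg(J_H)=\reg(S/J_H)+1$, Theorem~\ref{1.1} applied to each component gives $\reg(S_i/J_{G_i})\le n_i-1$, hence $\reg(S/J_G)\le\sum_i(n_i-1)=n-c$. Therefore $\reg(J_G)=n$ forces $n-1\le n-c$, i.e.\ $c=1$, and $G$ is connected.

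For connected $G$ the engine would be the vertex-splitting decomposition of Ohtani: for a vertex $v$, with $G_v$ the graph obtained by completing the neighborhood $N_G(v)$ to a clique and $G\setminus v$ the vertex deletion, one has $J_G=J_{G_v}\cap\big((x_v,y_v)+J_{G\setminus v}\big)$, which yields
\[
0\to S/J_G\to S/J_{G_v}\oplus S/\big((x_v,y_v)+J_{G\setminus v}\big)\to S/\big(J_{G_v}+(x_v,y_v)+J_{G\setminus v}\big)\to 0 .
\]
The usual regularity estimate for a short exact sequence bounds $\reg(S/J_G)$ by the regularities of the two middle summands and of the cokernel term shifted by $1$. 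Because $x_v,y_v$ form a regular sequence modulo $J_{G\setminus v}$, the summand $S/((x_v,y_v)+J_{G\setminus v})$ has regularity $\reg(J_{G\setminus v})-1\le n-2$ by Theorem~\ref{1.1}, and I would choose $v$---a branch vertex when $G$ is a tree, a cycle vertex when $G$ has a cycle---so that the deletion $G\setminus v$ and the cokernel graph fall under the inductive hypothesis and stay at most $n-2$.

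A connected graph fails to be a path exactly when it contains a cycle or is a tree with a vertex of degree $\ge 3$, so the governing base cases are the cycle $C_n$ and a path carrying one extra pendant at an interior vertex; for these I would read off $\reg(J_G)\le n-1$ from the sequence above, using that a complete graph contributes only $\reg(J_{K_m})=2$, so the clique created by completing $N_G(v)$ costs little. The hard part will be the term $S/J_{G_v}$: the graph $G_v$ has the \emph{same} number of vertices as $G$, so a plain induction on $n$ does not reach it. Overcoming this requires a finer induction---say on the number of edges, or the observation that $v$ is simplicial in $G_v$, so that iterating the completion terminates at a chordal (indeed closed) graph whose regularity is governed by its longest induced path---and, in the cycle case, where there is no leaf to strip, one must verify directly that deleting a cycle vertex leaves both $G_v$ and $G\setminus v$ strictly below $n-1$. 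Making this case analysis uniform, so that the bound never relaxes back to $n$, is the principal difficulty.
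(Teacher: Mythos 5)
Note first that the statement you are trying to prove is presented in the paper as a \emph{conjecture}: the authors give no proof, only a computational verification for graphs with at most $9$ vertices in characteristics $0$ and $2$. So there is no argument in the paper to measure yours against; the only question is whether your sketch constitutes a proof, and it does not --- as you yourself concede. The reduction to connected graphs is fine (regularity of $S/J_G$ is additive over components, and Theorem \ref{1.1} applied componentwise forces the number of components to be $1$). The core of the argument, however, is missing in two places. First, in the Ohtani decomposition the term $S/J_{G_v}$ lives on the same vertex set $[n]$, so neither Theorem \ref{1.1} nor induction on the number of vertices gives anything better than $\reg(S/J_{G_v})\leq n-1$, which is exactly one too large; you flag this but offer only the hope of ``a finer induction.'' Second, the intersection term enters the bound with a shift: the short exact sequence gives
$$\reg(S/J_G)\leq\max\bigl\{\reg(S/J_{G_v}),\ \reg\bigl(S/((x_v,y_v)+J_{G\setminus v})\bigr),\ \reg\bigl(S/(J_{G_v}+(x_v,y_v)+J_{G\setminus v})\bigr)+1\bigr\},$$
and the last quantity equals $\reg(J_{G_v\setminus v})$, which Theorem \ref{1.1} only bounds by $n-1$, not the needed $n-2$. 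To improve it you would have to know that $G_v\setminus v$ is not a path on $n-1$ vertices, which need not hold and which your case analysis does not address. Consequently even your proposed base cases (the cycle $C_n$, a path with one pendant edge) are not actually ``read off'' from the sequence; they require separate computations that are not supplied.

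In short, the strategy --- Ohtani's splitting $J_G=J_{G_v}\cap((x_v,y_v)+J_{G\setminus v})$ combined with induction --- is a reasonable line of attack on this conjecture, but as written the proposal is a plan with two unresolved obstructions (the non-decreasing vertex count of $G_v$ and the $+1$ shift on the third term), both of which sit precisely at the boundary where the bound would relax back to $n$. It therefore does not prove the statement, and nothing in the paper can be borrowed to close these gaps, since the paper itself leaves the statement open.
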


We verify Conjecture \ref{conj}
for graphs with at most $9$ vertices in characteristic $0$ and $2$ by using Macaulay2 \cite{GS}.
For this computation, we use the list of graphs with at most $9$ vertices in \cite{Ma}.



\begin{thebibliography}{1}

\bibitem{BH}
W. Bruns and J. Herzog,
Cohen--Macaulay rings, Revised Edition, Cambridge University Press, Cambridge, 1998.

\bibitem{D}
R. Diestel, 
Graph Theory, Fourth Edition, 
Graduate Texts in Mathematics \textbf{173}, 
Springer, 2010. 

\bibitem{GS}
D. Grayson and M. Stillman,
Macaulay 2, a software system for research in algebraic geometry,
Available at {\tt http://www.math.uiuc.edu.Macaulay2/}


\bibitem{HHHKR}
J. Herzog, T. Hibi, F. Hreinsd\'ottir, T. Kahle and J. Rauh, 
Binomial edge ideals and conditional independence statements,
\textit{Adv. in Appl. Math.} \textbf{45} (2010), 317--333.


\bibitem{Ly}
G. Lyubeznik,
A new explicit finite free resolution of ideals generated by monomials in an R-sequence,
\textit{J. Pure Appl. Algebra} \textbf{51} (1988), 193--195.

\bibitem{Oh}
M. Ohtani,
Graphs and ideals generated by some 2-minors,
\textit{Comm. Algebra} \textbf{39} (2011), 905--917.

\bibitem{Ma}
T. Matsui,
A Python program to generate all connected simple graphs,
Available at {\tt https://bitbucket.org/mft/csg/overview}

\bibitem{P}
I. Peeva,
Graded syzygies,
Algebra and Applications, vol.\ 14,
Springer, London, 2011.
\end{thebibliography}
\end{document}